\newtheorem{theorem}{Theorem}
\newtheorem{lemma}[theorem]{Lemma}
\theoremstyle{definition}
\newtheorem*{acknowledgement}{Acknowledgement}
\title{Nil happens. What about Sol?}
\author{T. T$\hat{\mathrm{a}}$m Nguy$\tilde{\hat{\mathrm{e}}}$n Phan}
\address{Department of Mathematics\\
5734 S. University Ave.\\
Chicago, IL 60637}
\email{ttamnp@math.uchicago.edu}
\DeclareMathOperator{\Nil}{Nil}
\DeclareMathOperator{\Sol}{Sol}
\def\R{\mathbb{R}}
\def\M{\overline{M}}
\begin{document}
\begin{abstract}
We construct complete, finite volume, $4$-dimensional manifolds with sectional curvature $-1<K<0$ with cusp cross sections compact solvmanifolds. 
\end{abstract}
\maketitle

\section{Introduction}
Let $M$ be a connected, noncompact, complete, finite volume manifold with sectional curvature $-1<K(M)<0$. By \cite{Gromovneg}, $M$ is diffeomorphic to the interior of a compact manifold $\M$ with boundary $\partial \M$. It is known that if $-1\leq K(M) \leq -a^2 <0$, then $\partial \M$ is an infra-nil manifold. For example, cross sections of cusps of finite volume, complex hyperbolic $4$-dimensional manifold are closed 3-manifolds with $\Nil$ geometry. In fact, compact flat or infra-nil manifolds can be realized as cusp cross sections of finite volume manifolds with pinched negative curvature by \cite[Corollary 6]{Ontanedahyperbolization} and \cite{BelegradekKapovich}. Fujiwara (\cite{Fujiwara}) constructed examples of $M$ for which $\partial M$ is a circle bundle over a compact hyperbolic manifold (thus is not infra-nil) for each dimension $\geq 4$.

While there are closed $3$-manifolds with $\Sol$ geometry realized as cusp cross sections of finite volume, nonpositively curved manifolds, e.g. Hilbert modular surfaces, examples of finite volume manifolds $M$ with $-1<K(M)<0$ whose cusp cross sections are diffeomorphic to a $\Sol$ manifold have not shown up (or it is unknown to the author). It might not be surprising that negatively curved cusps with $\Sol$ cross section exist because of the Fujiwara examples. We give a construction of such manifolds $M$ with $\Sol$ cusp cross sections in this paper. 

\section{The construction}
The procedure of the construction is that first we construct a negatively curved cusp with cross section a $3$-manifold with $\Sol$ geometry. Then we use Ontaneda's procedure of pinched smooth hyperbolization \cite{Ontanedahyperbolization} to glue a ``thick part" to the cusp keeping the metric smooth and negatively curved.
\begin{lemma}
Let $C$ be a compact $3$-manifold with $\Sol$ geometry. There is a complete, finite volume, negatively curved, Riemannian metric on $C\times\R$ with sectional curvature $-1<K<0$ on $(-R^2, \infty)$ for some $R^2 >0$. Moreover, this metric can be chosen so that it is a warped product metric on $C\times(0,1)$. 
\end{lemma}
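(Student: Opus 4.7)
The plan is to construct the metric explicitly as a doubly-warped product adapted to the Sol structure on $C$. I would first realize $C$ as $\Sol/\Gamma$, where $\Sol=\R^2\rtimes\R$ carries the standard left-invariant metric $g_{\Sol}=e^{-2S}\,dX_1^2+e^{2S}\,dX_2^2+dS^2$ and $\Gamma$ is the cocompact lattice determined by the hyperbolic monodromy defining $C$. On $\Sol\times\R$, with radial coordinate $t$ on the $\R$-factor, take the ansatz
\[ g = dt^2 + \psi(t)^2\,dS^2 + \phi(t)^2\bigl(e^{-2S}\,dX_1^2+e^{2S}\,dX_2^2\bigr) \]
for smooth positive $\phi,\psi$ to be chosen. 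Because the exponentials cancel the Sol scaling of $X_1,X_2$ under the lattice generator that translates $S$ by $\log\lambda$, $g$ is $\Gamma$-invariant and descends to $C\times\R$; on $C\times(0,1)$ it already has the required warped-product form. A simple warped product $dt^2+f(t)^2g_C$ cannot achieve both strict pinching and finite volume here, because $C$ admits no metric whose sectional curvatures all have the same sign, so the extra freedom of allowing $\phi\neq\psi$ is essential.

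The heart of the argument is a sectional-curvature computation in the adapted orthonormal frame $e_1=\phi^{-1}e^S\partial_{X_1}$, $e_2=\phi^{-1}e^{-S}\partial_{X_2}$, $e_3=\psi^{-1}\partial_S$, $e_4=\partial_t$. Writing $p=1/\psi$, $q=\phi'/\phi$, $r=\psi'/\psi$ and applying Koszul's formula, the pure sectional curvatures come out to be
\[ K(e_1,e_2)=p^2-q^2,\quad K(e_i,e_3)=-(qr+p^2),\quad K(e_i,e_4)=-\phi''/\phi,\quad K(e_3,e_4)=-\psi''/\psi \]
for $i=1,2$, together with a single mixed Riemann coefficient $R(e_1,e_3,e_1,e_4)=p(r-q)$ (with opposite sign for $e_2$). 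Strict pinching $-1<K<0$ then reduces to the six obvious pointwise inequalities together with the requirement that the $2\times 2$ matrix with diagonal $(qr+p^2,\phi''/\phi)$ and off-diagonal $-p(q-r)$ be positive-definite with both eigenvalues strictly below $1$. All remaining tilted $2$-planes turn out to yield only convex combinations of these pure curvatures, so they impose no new conditions.

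I would then verify the inequalities for the concrete choice $\phi(t)=e^{-t/2}$, $\psi(t)=4+e^{-t}$. In the variable $u=e^{-t}$, the pure curvatures are simple rational functions of $u$ with values strictly in $(-1,0)$, and the $2\times 2$ determinant $(qr+p^2)(\phi''/\phi)-p^2(q-r)^2$ simplifies to $u(u^3+12u^2+48u+96)/\bigl(8(4+u)^4\bigr)$, a rational function whose numerator has only positive coefficients, so strict pinching holds at every $t\in\R$. The main obstacle is then finite volume: the volume density $\phi^2\psi=4e^{-t}+e^{-2t}$ integrates at $+\infty$ but diverges at $-\infty$. I would resolve this by smoothly modifying $\phi$ and $\psi$ on $(-\infty,-R^2]$ for a suitable $R^2>0$ so that the fibers collapse at $-\infty$ as well, producing a second cusp at the cost of losing strict pinching in that far region. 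Because the lemma demands $-1<K<0$ only on $(-R^2,\infty)$ and warped-product structure only on $C\times(0,1)$---both unaffected by the modification---the resulting metric on $C\times\R$ is complete, of finite volume, negatively curved, strictly pinched on $(-R^2,\infty)$, and warped-product on $C\times(0,1)$, as required.
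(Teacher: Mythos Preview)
Your ansatz and curvature computation are exactly the paper's: the paper writes
\[
g=dt^2+f(t)^2\,dz^2+e^{-2t}\bigl(e^{-2z}dx^2+e^{2z}dy^2\bigr),
\]
i.e.\ your metric with $\phi(t)=e^{-t}$ fixed and $\psi=f$ free, and reduces $K<0$ to four pointwise inequalities on $f$ that match your list.  So the framework is the same; the disagreement is in the choice of $\phi,\psi$ and in what ``warped product'' means.

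The phrase ``warped product metric on $C\times(0,1)$'' in the lemma refers to a \emph{single-factor} warped product $dt^2+h(t)^2 g_{\Sol}$, because that is the form required as input to Ontaneda's gluing in the next paragraph of the paper.  The paper obtains it by taking $f(t)=e^{-t}$ for $t\ll 0$ (so $\psi=\phi$ there and the metric is literally $dt^2+e^{-2t}g_{\Sol}$) and $f(t)=1+e^{-t}$ for $t\ge 0$, interpolating smoothly in between.  Your observation that a single-warp metric cannot achieve pinching and finite volume \emph{globally} is correct, but the paper only asks for single-warp on a collar; elsewhere it uses the doubly-warped form, just as you do.  Your choice $\phi=e^{-t/2}$, $\psi=4+e^{-t}$ never satisfies $\phi=\psi$, so your metric is nowhere a single-factor warped product over $(C,g_{\Sol})$, and the sentence ``on $C\times(0,1)$ it already has the required warped-product form'' is the gap.

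A smaller point: your modification at $t\to-\infty$ to force finite volume while keeping $K<0$ is delicate and not really carried out.  Since $K(e_1,e_2)=\psi^{-2}-(\phi'/\phi)^2$, letting $\psi\to 0$ drives this curvature to $+\infty$ unless $|\phi'/\phi|$ blows up at a matched rate; you would have to say how.  The paper, for its part, simply builds the cusp on a half-line (finite volume comes from $\int_0^\infty e^{-2t}\,dt<\infty$) and leaves the non-cusp end as the warped-product collar where the hyperbolized thick part is glued on.
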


\begin{proof}
The $\Sol$ metric on $C$ is 
\[ g_{\Sol} = dz^2 + e^{-2z}dx^2+ e^{2z}dy^2.\]
Consider the following metric on $C\times(0,\infty)$.
\[g = dt^2 + f(t)^2dz^2 + e^{-2t}(e^{-2z}dx^2+ e^{2z}dy^2)\]
for some function $f\colon (0,\infty) \longrightarrow \R^+$. The nonzero components (up to symmetry) of Riemann curvature tensor are
\[ R_{1212} = \dfrac{e^{-4t}(1-f(t)^2)}{f(t)^2}\;, \quad R_{1313} = e^{-2t-2z}(f(t)f'(t) -1)\;, \quad R_{1414} = -e^{-2t-2z},\]
\[ R_{2323} = e^{-2t+2z}(f(t)f'(t) -1)\;, \quad R_{2424} = -e^{-2t+2z}\;,\quad R_{3434} = -f(t)f''(t),\]
\[ R_{1431} = e^{-2t-2z}\left(1+ \dfrac{f'(t)}{f(t)}\right)\;,\quad R_{2432} = -e^{-2t+2z}\left(1+\dfrac{f'(t)}{f(t)}\right).\]
The metric $g$ has negative curvature if the following four conditions are satisfied.
\begin{itemize}
\item[a)] $f(t) >1$ for all $t$,
\item[b)] $f'(t) <0$ for all $t$, 
\item[c)] $f''(t) >0$ for all $t$, and
\item[d)] $1- f(t)f'(t) > \left( 1+ \dfrac{f'(t)}{f(t)}\right)^2$ for all $t$.
\end{itemize}
We see that $f(t) = e^{-t}$ satisfies the above four conditions for $t<0$. Also, $f(t) = 1+e^{-t}$ satisfies the above four conditions for all $t \in \R$, and the sectional curvature in this case is bounded from below. We pick the function $f(t)$ to be an interpolation between these two functions, i.e. $f(t) = e^{-t}$ for $t <<0$, and $f(t) = 1 + e^{-t}$ for $t \geq 0$, in such a way that the above four conditions are satisfied during the interpolating process. It is an exercise for the reader that such a function exists. 

We see that for $t$ negative enough the metric $g$ (with the chosen function $f$) is a warped product
\[ g = dt^2 + e^{-2t}(dz^2 + e^{-2z}dx^2+ e^{2z}dy^2).\]
It is clear that this metric is complete. Finiteness of volume follows since there is the warping factor $e^{-2t}$ decays fast enough, i.e. $\int_0^{\infty}e^{-2t}dt <\infty$.
\end{proof}

By the same procedure as in \cite[Section 11]{Ontanedahyperbolization}, e.g. apply pinched smooth hyperbolization to the suspension $\Sigma C$ of $C$, we get a finite volume, manifold with sectional curvature $-1<K(M)<0$ with two cusps, each with cross section $C$.
\begin{acknowledgement}
The author would like to thank Benson Farb for asking her the question in this paper. She should like thank the PCMI Summer School in 2012 for providing a good air-conditioned working environment during which this paper was written up. Finally she would like to thank God and the communist party of Vietnam for their great guidance.
\end{acknowledgement}
\bibliographystyle{amsplain}
\bibliography{bibliography}

\providecommand{\bysame}{\leavevmode\hbox to3em{\hrulefill}\thinspace}
\providecommand{\MR}{\relax\ifhmode\unskip\space\fi MR }
\providecommand{\MRhref}[2]{%
  \href{http://www.ams.org/mathscinet-getitem?mr=#1}{#2}
}
\providecommand{\href}[2]{#2}
\begin{thebibliography}{1}

\bibitem{BelegradekKapovich}
Igor Belegradek and Vitali Kapovitch, \emph{Classification of negatively
  pinched manifolds with amenable fundamental groups}, Acta Math. \textbf{196}
  (2006), no.~2, 229--260.

\bibitem{Farb}
Benson Farb, \emph{Group actions and {H}elly's theorem}, Adv. Math.
  \textbf{222} (2009), no.~5, 1574--1588.

\bibitem{Fujiwara}
Koji Fujiwara, \emph{A construction of negatively curved manifolds}, Proc.
  Japan Acad. Ser. A Math. Sci. \textbf{64} (1988), no.~9, 352--355. \MR{979243
  (90c:53102)}

\bibitem{Gromovneg}
M.~Gromov, \emph{Manifolds of negative curvature}, J. Differential Geom.
  \textbf{13} (1978), no.~2, 223--230. \MR{540941 (80h:53040)}

\bibitem{Ontanedahyperbolization}
P.~Ontaneda, \emph{Pinched smooth hyperbolization (arxiv:1110.6374)}.

\end{thebibliography}

\end{document}